
\documentclass[draft]{amsart}
\setlength{\parindent}{0in}
\setlength{\parskip}{2mm}

\usepackage{amsmath, amsfonts, amssymb, amsthm, amscd}

\theoremstyle{plain}
\newtheorem{theorem}{Theorem}[section]
\newtheorem{lemma}[theorem]{Lemma}
\newtheorem{corollary}[theorem]{Corollary}

\newtheorem{remark}[theorem]{Remark}
\newtheorem{claim}[theorem]{Claim}

\newtheorem*{ack}{Acknowledgement}

\theoremstyle{definition}

\newtheorem{example}{Example}[section]
\newtheorem{definition}[theorem]{Definition}

\input xy
\xyoption{all}

\def\p{{\frak p}}
\def\q{{\frak q}}

\def\m{{\frak m}}

\def\Coker{\operatorname{Coker}}
\def\div{\operatorname{div}}
\newcommand{\CH}{\operatorname{CH}}
\newcommand{\Spec}{\operatorname{Spec}}

\begin{document}
\subjclass{Primary 14C17, 14C20}
\title[Commutativity of Intersection with Divisors]
{An Algebraic Proof of the Commutativity of Intersection with Divisors}

\author{Paul Roberts}
\thanks{The first author was supported in part by NSF grant 0500588.}
\address{Department of Mathematics, University of Utah, Salt Lake City, UT 84112}
              
\email{roberts@math.utah.edu}

\author{Sandra Spiroff}
\address{Department of Mathematics, Seattle University, Seattle, WA  98122}
\email{spiroffs@seattleu.edu}


\begin{abstract} We present a purely algebraic proof of the commutativity of
the operation defined by intersection with divisors
on the Chow group of   a local Noetherian domain. 
\end{abstract}
\maketitle

\section*{Introduction} The operation given by intersecting with a
Cartier divisor is one of the basic ideas of Intersection
Theory, and the fact that it defines a commutative
operation in the Chow group is fundamental in making
the theory work.  If the intersection is proper, that is, if one intersects
a divisor $D$ with a variety $W$ not contained in $D$, this concept is quite simple.
However, if $W$ is contained in $D$, then even the basic definition is considerably
more complicated.  A classical approach to this question is to use
a ``moving lemma" to move $D$ to another divisor which
meets $W$ properly, while a newer method, introduced by  Fulton (\cite{F}),
is to use a theory of ``pseudo-divisors".   However, in the case of a local noetherian ring,
such an intersection must always be zero, and one can give a simple
definition in general.  
In spite of this, there has been no proof
of the crucial property that this operation is commutative in
the case of local rings  that did not involve the general definition, as well as
a considerable amount of machinery from Algebraic Geometry.  Proofs
of this fact   can be found in \cite{F} and \cite{R}; they use a
pullback to the blow-up of an ideal, the general theory for the resulting
divisors, and properties of proper morphisms of schemes.  

Our aim is to give an algebraic proof for this purely algebraic statement.
In the next section we give some background information as well as
precise definitions and a statement of our main result.  The following sections
reduce the problem to normal domains and give a more detailed statement of
the theorem. We then prove the theorem in a special case, and finally
give a proof of the general theorem, inducting on the
number of height one primes contained in the intersection.  (If the intersection
has codimension two, the proof of the result is easy).  

In our proof we give an explicit formula for the difference
between the intersections with two divisors taken in
different orders as a sum of divisors of rational functions
(see Theorem 3.1 in Section 3).  
This formula has been discovered previously in different contexts.
First, it has an interpretation in $K$-theory. 
Basically, the formula given in (6) below amounts to the assertion that the composition of the tame symbol and the $\div$ map in the Gersten complex is zero.  More specifically, for any Noetherian domain $R$ with field of fractions $K$, there is a complex \cite{Q}
$$K_2(K) \to \sum_{\text{ht}(\p)=1} \kappa(\p)^\times \to \sum_{\text{ht}(\q)=2} \mathbb Z,$$
and when $R$ is normal the first map is the tame symbol \cite{G}
$$\{\alpha, \beta \} \mapsto \sum_{\p} (-1)^{\nu_{\p}(\alpha) \nu_{\p}(\beta)} \cdot \frac{\alpha^{\nu_{\p}(\beta)}}{\beta^{\nu_{\p}(\alpha)}}$$ 
and the second map is $\div$.  That the Gersten complex is exact when $R$ is the localization of a finite type smooth $k$-algebra at a prime \cite{Q} leads to Bloch's formula (where $d$ = dim($R$))
$$H_{Zar}^2(X, \tilde{K_2}) \cong \CH_{d-2}(X).$$
The formula of Theorem 3.1 was also used by Kresch \cite{K} to give a more canonical geometric
proof of the commutativity that we prove here by algebraic means.

\section{Preliminaries}

We assume throughout that $A$ is a Noetherian ring.  In order to make intersection
theory work it is necessary to assume a few further properties that hold in
most situations that arise naturally.  First, we assume that there is a
good definition of dimension; that is, for all prime ideals $\p$ the dimension
of $A/\p$ is defined and that if $\p$ and $\q$ are distinct prime ideals such that
$\p\subset \q$ and there are no prime ideals strictly between $\p$ and $\q$, then
$\dim A/\p = \dim A/\q +1$.  The other condition we assume is that for all
$\p$, the normalization of $A/\p$ in its quotient field is a finitely
generated $A/\p$-module.  In particular, an excellent ring satisfies these properties.  For more details on these assumptions, we
refer to  \cite[Ch. 2]{F} and \cite[Ch. 8]{R}.

If $M$ is a module of finite length, we denote its length $\ell(M)$.
Let $Z_i(A)$ be the free abelian group with basis consisting of all prime
ideals $\q$ such that the dimension of $A/\q$ is $i$.  The elements of $Z_i(A)$ are called {\em cycles} of dimension $i$, and the basis element
corresponding to $A/\q$ is denoted $[A/\q]$.

\begin{definition} Let $\frak p$ be a prime ideal such that $\dim A/\p = i+1$ and $x$ an element of $A$ which is not in $\frak p$.  The cycle
$$ \sum \ell_{A_\q}(A_{\frak q}/(\frak p, x) A_{\frak q})[A/\q],$$
where the sum is taken over all $\frak q \in$ Spec($A$) such that dim$(A/\frak q)=i$,
is denoted by {\bf div($\frak p$, x)}, or occasionally $\div(A/\frak p, x)$.
\end{definition}

\begin{definition} {\bf Rational equivalence} is the equivalence
relation on $Z_i(A)$ generated by setting div$(\frak p,x)=0$ for all such primes $\frak p$ and elements $
x$.  We remark that if $x$ and $y$ are not in $\p$, then $\div(\p,xy)=\div(\p,x)+
\div(\p,y)$, and thus for any element $x/y$ in the fraction field of $A/\p$, we can
define $\div(\p,x/y)=\div(\p,x)-
\div(\p,y)$.
\end{definition}

\begin{definition} The $i$-th component of the Chow group of $A$, denoted by $\CH_i(A)$,
is $Z_i(A)$ modulo rational equivalence.  The {\bf Chow group} of $A$, denoted by $\CH_*(A)$, is obtained
by taking the direct sum of $\CH_i(A)$ for all $i$.  Similarly, the group of cycles
$Z_*(A)$ is the direct sum of the $Z_i(A)$.
\end{definition}

\begin{definition} \label{map} The intersection of a principal divisor $(u)$, where $u$ is an element in $A$, is a map $Z_*(A) \to Z_*(A/uA)$.  It is denoted by $(u) \cap -$ and referred to as {\bf intersection with (u)}.  On a basis element $[A/\frak p]$ it is defined by
$$
(u) \cap [A/\frak p] =\begin{cases}
0& \text{ if } u \in \frak p \\
 & \\
\displaystyle{\div(\p,u)}
   & \text{ if } u \notin\frak p
\end{cases}
$$
\end{definition}

If $\alpha = \sum n_i[A/\p_i]$ is an arbitrary cycle, it follows from the above definitions
that 
$$(u) \cap \alpha = \sum_{u\not\in \p_i} n_i \div(\p_i,u).$$
We note that if $u\not\in\frak p$,
then $(u) \cap [A/\frak p]$ is by definition rationally
equivalent to zero in the Chow group of $A$, but it is generally not rationally
equivalent to zero  in the Chow group
of $A/uA$.

Our main theorem is the following:

\begin{theorem}\label{commutativity}
Let $u$ and $v$ be elements of the ring $A$, and let $\alpha \in Z_i(A)$.  Then the cycles $(u)\cap (v) \cap \alpha$ and $(v)\cap(u) \cap \alpha$ are rationally equivalent in $Z_{i-2}(A/(u,v))$.
\end{theorem}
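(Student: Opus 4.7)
By $\mathbb Z$-linearity it suffices to take $\alpha = [A/\p]$; replacing $A$ by $A/\p$, I may assume $\alpha = [A]$ for a Noetherian domain $A$ of dimension $i$ satisfying the hypotheses of Section~1. If $u=0$ or $v=0$ both cycles vanish, so assume $u,v\neq 0$. To reduce further to the normal case, let $\pi\colon A\hookrightarrow \widetilde A$ be the normalization, which is finite over $A$ by hypothesis. The proper pushforward $\pi_*$ on cycles sends $[\widetilde A]$ to $[A]$, respects rational equivalence, and satisfies the projection formula $\pi_*((u)\cap\beta)=(u)\cap\pi_*(\beta)$ for $u\in A$ and $\beta\in Z_*(\widetilde A)$. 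Applying $\pi_*$ twice, the sought commutativity for $[A]$ in $\CH_{i-2}(A/(u,v))$ follows from the analogous commutativity for $[\widetilde A]$ in $\CH_{i-2}(\widetilde A/(u,v))$. So I may assume $A$ is a normal Noetherian domain.

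\textbf{The key formula.} In the normal case each localization $A_\p$ at a height-one prime is a DVR with valuation $\nu_\p$, and $(u)\cap[A]=\sum_\p \nu_\p(u)[A/\p]$. I will establish the explicit identity
\begin{equation*}
(u)\cap(v)\cap[A]\;-\;(v)\cap(u)\cap[A]\;=\;\sum_{\operatorname{ht}(\p)=1}\div\!\left(\p,\;(-1)^{\nu_\p(u)\nu_\p(v)}\frac{u^{\nu_\p(v)}}{v^{\nu_\p(u)}}\right)
\end{equation*}
in $Z_{i-2}(A/(u,v))$, where the argument of $\div$ is the tame symbol at the pair $(u,v)$, an element of the residue field $\kappa(\p)^\times$. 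Every summand on the right has the form $\div(\p,f)$ for a rational function $f$ on $A/\p$, so the right-hand side is rationally equivalent to zero and Theorem~\ref{commutativity} follows at once.

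\textbf{Proof of the formula.} To verify the identity, I compare coefficients of $[A/\q]$ for each height-two prime $\q$ of $A$, which localizes the problem to the two-dimensional normal local ring $A_\q$. Only the finitely many height-one primes $\p\subset\q$ with $u\in\p$ or $v\in\p$ contribute, and I induct on the number $k$ of such primes. The base case is the codimension-two situation in which $(u,v)A_\q$ is $\q$-primary, so no $\p\subsetneq\q$ contains both $u$ and $v$; here a direct length calculation in $A_\q$, using that $(u,v)$ is a regular sequence in $A_\q$ and that both lengths $\ell_{A_\q}(A_\q/(u,v))$ are matched by the two tame-symbol contributions from primes containing $u$ alone and $v$ alone, yields the identity. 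For the inductive step, when an additional height-one prime $\p_0\subset\q$ contains both $u$ and $v$, I factor off the $\p_0$-part of $u$ (or of $v$), reducing $k$ by one; the multiplicativities $\div(\p,fg)=\div(\p,f)+\div(\p,g)$ and the bilinearity of the tame symbol track both sides of the identity consistently under this factoring. The main obstacle is precisely this inductive step: one must choose the factorization so that any newly introduced height-one primes are already among the existing $\p_i\subset\q$, and verify that the tame-symbol bookkeeping matches the length computations on the intersection side exactly, so that the induction genuinely reduces the count rather than merely shifting the problem.
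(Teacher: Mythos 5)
Your reduction to a two-dimensional local normal domain tracks the paper's Section 2 closely, but the explicit identity you propose to prove is wrong. The sum $\sum_{\text{ht}(\p)=1}\div\bigl(\p,(-1)^{\nu_\p(u)\nu_\p(v)}u^{\nu_\p(v)}/v^{\nu_\p(u)}\bigr)$ over \emph{all} height-one primes is the zero cycle: it is the image of $\{u,v\}$ under the composite of the tame symbol and $\div$ in the Gersten complex, and the vanishing of that composite is precisely what the paper's Theorem 3.1 repackages, not the left-hand side you have set it equal to. Unwinding the terms, the primes containing only one of $u,v$ already reproduce $(u)\cap(v)\cap[A]-(v)\cap(u)\cap[A]$, and the primes $\p_i$ containing both contribute the rational equivalences that cancel it; the correct identity is $(u)\cap(v)\cap[A]-(v)\cap(u)\cap[A]=\sum_i\div\bigl(\p_i,\,v^{\nu_{\p_i}(u)}/u^{\nu_{\p_i}(v)}\bigr)$, the sum running only over primes containing both $u$ and $v$ and with the ratio inverted relative to yours. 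In the paper's Example 1.7 with $u=xz$, $v=xy$ in $k[x,y,z]$, your right-hand side evaluates to the zero cycle while the left-hand side is $[A/(x,y)]-[A/(x,z)]\neq 0$, so the two sides do not agree as cycles. Restricting to primes containing both $u$ and $v$ is also logically essential: $\div(\p,f)$ gives a rational equivalence in $\CH_{i-2}(A/(u,v))$ only when $\p\supseteq(u,v)$, and this fails for the $\q'_k$ and $\q_l$ you have included, so the clause ``every summand has the form $\div(\p,f)$, hence the right side is rationally equivalent to zero'' is not valid as stated.

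Beyond the misstated formula, you yourself flag the inductive step---choosing a factorization that introduces no new problematic height-one primes and matching the tame-symbol bookkeeping against the length computations---as ``the main obstacle,'' and you do not carry it out. That step is where the paper does essentially all of its work: Theorem 4.1 first treats the case where $\nu_{\p_i}(u)/\nu_{\p_i}(v)$ is constant via length computations on $P/(u,v)A$ with $P=\bigcap\p_i^{(n_i)}$, Corollaries 4.2 and 4.3 extract the one-prime base case and a reflected form, and Section 5 performs a delicate induction on the number of $\p_i$ using Approximation-Theorem choices of auxiliary elements $a_i,b_i$ and the three-term decomposition of Lemma 5.1 to control the cross terms. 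Your proposal identifies the right overall shape of the argument and the correct reductions, but the key formula is misstated and the substantive inductive content is declared rather than proved; as it stands this is a gap, not a proof.
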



One of the main consequences of the theorem is that intersection with $(u)$ defines
an operation from the  Chow group of $A$ to the Chow group of $A/uA$. 

\begin{corollary}
The mapping on cycles that sends $\alpha$ to $(u) \cap \alpha$ induces a mapping
from $CH_*(A)$ to $CH_*(A/uA)$.  
\end{corollary}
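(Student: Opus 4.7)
The plan is to reduce to the single-generator case and then invoke Theorem~\ref{commutativity}. By linearity, to see that $\alpha \mapsto (u) \cap \alpha$ descends to a map on Chow groups it is enough to verify that if $\alpha \sim 0$ in $Z_*(A)$ then $(u) \cap \alpha \sim 0$ in $Z_*(A/uA)$. Since rational equivalence on $Z_*(A)$ is by definition generated by the cycles $\div(\p,v)$ for $\p$ prime and $v \notin \p$, the whole problem reduces to proving that $(u) \cap \div(\p,v) \sim 0$ in $Z_*(A/uA)$ for every admissible pair $(\p,v)$.

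The key observation I would use is that Definition~\ref{map} gives $\div(\p,v) = (v) \cap [A/\p]$ when $v \notin \p$, so $(u) \cap \div(\p,v) = (u) \cap (v) \cap [A/\p]$. Applying Theorem~\ref{commutativity} to the cycle $[A/\p]$, I obtain
\[
(u) \cap (v) \cap [A/\p] \;\sim\; (v) \cap (u) \cap [A/\p] \qquad \text{in } Z_{i-1}(A/(u,v)).
\]
Because every prime appearing in a defining relation of rational equivalence on $Z_*(A/(u,v))$ contains $u$, such a relation is automatically a valid relation on $Z_*(A/uA)$, so the equivalence also holds in $Z_{i-1}(A/uA)$.

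It remains to check that the right-hand side is itself rationally equivalent to zero in $Z_{i-1}(A/uA)$. I split into cases on whether $u \in \p$. If $u \in \p$, then $(u) \cap [A/\p] = 0$ by Definition~\ref{map}, and the expression vanishes outright. If $u \notin \p$, then $(u) \cap [A/\p] = \div(\p,u) = \sum n_\q [A/\q]$, a sum over primes $\q$ that all contain $u$; applying $(v) \cap -$ produces a sum of cycles $\div(\q,v)$ with $\q \supseteq (u)$, and each such cycle is zero in $\CH_{i-1}(A/uA)$ by the very definition of rational equivalence in the quotient ring. Combining the two equivalences gives $(u) \cap \div(\p,v) \sim 0$ in $Z_{i-1}(A/uA)$, as needed.

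The proof is almost entirely formal once the commutativity theorem is in hand; there is no real obstacle, only a small bookkeeping point. The thing I would be careful about is tracking where each rational equivalence lives, in particular the passage from $Z_*(A/(u,v))$ to $Z_*(A/uA)$, which works because every generator $\div(\mathfrak r,w)$ with $\mathfrak r \supseteq (u,v)$ remains a generator of rational equivalence after killing only $u$.
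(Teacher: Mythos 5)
Your argument is essentially identical to the paper's: both reduce to showing $(u)\cap\div(\p,v)\sim 0$ in $Z_*(A/uA)$, rewrite this as $(u)\cap(v)\cap[A/\p]$, apply Theorem~\ref{commutativity} to swap the order, and then observe that $(v)\cap(u)\cap[A/\p]$ is a sum of terms $\div(\q,v)$ with each $\q$ containing $u$, hence rationally equivalent to zero in $\CH_*(A/uA)$. The only cosmetic difference is that you spell out the case $u\in\p$ and the passage from relations on $Z_*(A/(u,v))$ to relations on $Z_*(A/uA)$, which the paper leaves tacit; the substance is the same.
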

\begin{proof}
 We must show that for any $\p\in \Spec(A)$ and any $x\not\in \p$, the
cycle $(u) \cap \div(\p,x)$ is rationally equivalent to zero as a cycle in $\Spec(A/uA)$.
By Theorem \ref{commutativity}, we have
$$(u) \cap \div(\p,x)=(u) \cap (x) \cap [A/\p] =(x) \cap (u) \cap [A/\p].$$
Let $(u) \cap [A/\p] = \sum n_i[A/\q_i]$.  Then each $\q_i$ contains $u$, so we may consider the 
$\q_i$ to be prime ideals in $A/uA$.  We thus have
$$(x) \cap (u) \cap [A/\p] = \sum_{x\not\in\q_i} n_i\div(\q_i, x),$$
which is clearly rationally equivalent to zero in the Chow group of $A/uA$.
\end{proof}

We remark that Theorem \ref{commutativity} is very easy to prove when the ideal generated
by $u$ and $v$ in $A/\p$ has height two and $\alpha = [A/\p]$; in this case the two cycles are in fact equal,
not just rationally equivalent.
To illustrate the general situation,  we give an example
where two elements intersect in codimension one.

\begin{example}  \label{ex} Let $A = k[x,y,z]$, where $k$ is a field.
We consider the intersections with the divisors defined by the elements
$xz$ and $xy$.  The following diagram shows the height one prime ideals that
contain these elements.

\vskip.25in

\xymatrix{
  & & & (z) \ar@{-}[dr] & & \frak (x)  \ar@{-}[dl]  \ar@{-}[dr]
   & & \frak (y) \ar@{-}[dl] \\
   & & & & xz & & xy }

By Definition \ref{map}, $$\displaystyle{(xz) \cap (xy) \cap [A] =
(xz) \cap \left([A/xA] + [A/yA]\right) = [A/(x,y)] + [A/(y,z)]},$$

and $$\displaystyle{(xy) \cap (xz) \cap [A] =
(xy) \cap \left([A/xA] + [A/zA]\right)= [A/(x,z)] + [A/(y,z)]}.$$
Clearly these cycles are not equal.  However, $(xz) \cap (xy) \cap [A] -  (xy) \cap (xz) \cap [A]$ $ =
\text{div} \left({(x)}, y/z \right),$ so they are rationally equivalent in $Z_1(A/(xy,xz))$.

\end{example}

In closing this section, we provide a statement
of the Approximation Theorem \cite[12.6]{M} since it is instrumental to our calculations.

{\bf Approximation Theorem:} {\it Let $K$ be the field of fractions of a
Krull domain $A$.  Given any set of height one primes  $\frak p_1, \dots, \frak p_r \in$ Spec($A$)
with corresponding discrete valuations $v_{\p_i}$,
and given integers $n_1, \dots, n_r$,  there is an element $x \in K^*$ such that
$v_{\frak p_i}(x) = n_i$ with $v_{\frak q}(x) \geq 0$ for all $\frak q \neq \frak p_i.$}

\section{Reduction to the case of a two-dimensional normal domain}

We first note that since we are proving a result for elements of the  group of cycles, we
can assume our element is a generator; that is, a cycle $[A/\frak p]$ for some prime
ideal $\frak p$.  Since the support of the cycles under consideration lie
in Spec$(A/\frak p)$ we can then assume that $\frak p = 0$ and we are dealing
with $[A]$ for an integral domain $A$.

As a first step in reducing to the case in which $A$ is a two-dimensional local
domain, we prove the following lemma.  

\begin{lemma}\label{chiformula}
Let $A$ be a one-dimensional local domain with maximal ideal $\m$, 
and let $x$ be a nonzero element of $A$.  For
a finitely generated $A$-module $M$, let $\chi(M) = \ell(M/xM)-\ell(_xM)$, where
$_xM = \{m\in M|xm = 0\}$.  Then
$$\chi(M)=\ell(A/xA)(rank(M)).$$
\end{lemma}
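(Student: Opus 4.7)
The plan is to show that both sides of the identity are additive on short exact sequences of finitely generated $A$-modules, and then to verify the formula on the two possible prime cyclic quotients.

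First I would check that $\chi(M)$ is well-defined. Since $A$ is a one-dimensional local domain and $x\neq 0$ is automatically a nonzerodivisor, the quotient $A/xA$ is a Noetherian local ring of Krull dimension zero, hence Artinian. Both $M/xM$ and ${}_xM$ are finitely generated modules annihilated by $x$, so each has finite length over $A/xA$ (and therefore over $A$).

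The key step is additivity of $\chi$ on a short exact sequence $0 \to M' \to M \to M'' \to 0$. I would apply the snake lemma to the diagram whose vertical arrows are multiplication by $x$; the kernels assemble into ${}_xM'$, ${}_xM$, ${}_xM''$ and the cokernels into $M'/xM'$, $M/xM$, $M''/xM''$, yielding the six-term exact sequence
\[
0 \to {}_xM' \to {}_xM \to {}_xM'' \to M'/xM' \to M/xM \to M''/xM'' \to 0.
\]
Taking the alternating sum of lengths gives $\chi(M) = \chi(M') + \chi(M'')$. Since rank (computed by tensoring with the fraction field of $A$) is clearly additive, both sides of the proposed equality are additive functions on the category of finitely generated $A$-modules.

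Next I would reduce to the case of a prime cyclic quotient. By the standard prime filtration theorem for Noetherian modules, $M$ admits a finite filtration $0 = M_0 \subset M_1 \subset \cdots \subset M_n = M$ whose successive quotients have the form $A/\p_i$ for primes $\p_i \in \Spec(A)$. By additivity it suffices to prove the formula when $M = A/\p$. Because $A$ is a one-dimensional local domain, the only prime ideals are $0$ and $\m$. When $\p = \m$, the module $A/\m$ is the residue field, of rank $0$; here ${}_xM = M/xM = A/\m$, so $\chi(A/\m) = 0$, matching the right-hand side. When $\p = 0$, the module $M = A$ has rank $1$, and $\chi(A) = \ell(A/xA) - 0 = \ell(A/xA)$, again matching the right-hand side. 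These two cases finish the proof.

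The only step requiring any real care is the snake lemma computation establishing additivity of $\chi$; everything else is standard bookkeeping with prime filtrations and ranks, so I do not expect a substantive obstacle.
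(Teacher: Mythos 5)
Your proof is correct and follows the same route as the paper: both sides are additive on short exact sequences, so a prime filtration reduces the claim to $M=A$ and $M=A/\m$, where it is checked directly. You have simply filled in the details (the snake lemma for additivity of $\chi$, finiteness of the lengths) that the paper asserts without elaboration.
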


\begin{proof} The lengths involved are finite, and both sides of the equation  are additive
on short exact sequences.  Thus, by taking a filtration of $M$, we can
reduce to the cases where  $M =A$ or
$M = A/\frak m$.  For $M = A$ both sides are equal to
the length of $A/xA$, and for $M = A/\frak m$ both sides are zero.
\end{proof}

\medskip

Now let $A$ and $B$ be integral domains,  let $B$ be a finite
extension of $A$, and let $\Phi$ be the induced map from $\Spec(B)$
to $\Spec(A)$.   We define a map $\Phi_*$ from cycles on $B$ to cycles on $A$
by letting
$$\Phi_*([B/\frak P]) = [\kappa(\frak P): \kappa(\frak p)][A/\frak p],$$
where $\frak p = A \cap \frak P$.  Here $[\kappa(\frak P): \kappa(\frak p)]$ denotes the degree of the extension of residue fields, which is finite since $B$ is a finite extension of $A$.  The
next lemma is a special case of the projection formula in intersection theory.

\begin{lemma}\label{projection}
Let $A\subset B$ be as above, and let $u$ be a nonzero element of $A$ (and
thus also of $B$).  Then for any cycle $\eta$ on $B$, the cycles $\Phi_*((u) \cap \eta)$
and $(u) \cap (\Phi_*(\eta))$ are equal.
\end{lemma}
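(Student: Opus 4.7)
The plan is to reduce by $\mathbb{Z}$-linearity on cycles to the case $\eta = [B/\mathfrak{P}]$ for a single prime $\mathfrak{P}$ of $B$; set $\p = A \cap \mathfrak{P}$. If $u \in \mathfrak{P}$ (equivalently $u \in \p$), both sides of the claimed equality vanish by Definition \ref{map}, so I may assume $u \notin \mathfrak{P}$. I will then verify the equality coefficient by coefficient: for each prime $\q \supset (\p, u)$ of $A$ with $\dim A/\q = \dim A/\p - 1$, I aim to show
$$\sum_{\Q \cap A = \q} \ell_{B_{\Q}} \bigl( B_{\Q} / (\mathfrak{P}, u) B_{\Q} \bigr) \, [\kappa(\Q) : \kappa(\q)] \;=\; [\kappa(\mathfrak{P}) : \kappa(\p)] \cdot \ell_{A_{\q}} \bigl( A_{\q} / (\p, u) A_{\q} \bigr),$$
these being the coefficients of $[A/\q]$ in $\Phi_*((u) \cap [B/\mathfrak{P}])$ and in $(u) \cap \Phi_*([B/\mathfrak{P}])$, respectively.

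The key maneuver is to localize at $\q$ and mod out $\p$: set $C = (A/\p)_\q$ and $D = (B/\mathfrak{P})_\q$, both localizations taken at $A \setminus \q$. The dimension hypothesis forces $\q/\p$ to have height one in $A/\p$, so $C$ is a one-dimensional local domain, and it likewise makes each $\Q$ appearing in the left-hand sum correspond to a maximal ideal $\Q/\mathfrak{P}$ of $D$. Because $B/\mathfrak{P}$ is an integral domain finite over $A/\p$, the module $D$ is finite and torsion-free over $C$, with generic rank equal to $[\kappa(\mathfrak{P}) : \kappa(\p)]$, and the image of $u$ is nonzero in both $C$ and $D$. Applying Lemma \ref{chiformula} to $M = D$ then yields
$$\ell_C(D/uD) \;=\; \ell_C(C/uC) \cdot [\kappa(\mathfrak{P}) : \kappa(\p)].$$

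For the other expression, I observe that $D/uD$ has finite length over $C$ and is supported precisely at the maximal ideals $\Q_i/\mathfrak{P}$ of $D$, so it splits as the direct sum of its localizations at those maximal ideals. A standard residue-field filtration shows that the $C$-length of the $i$-th summand equals $\ell_{D_{\Q_i}} \bigl( D_{\Q_i}/uD_{\Q_i} \bigr) \cdot [\kappa(\Q_i) : \kappa(\q)]$, which is exactly the term appearing on the left-hand side of the displayed identity. Summing over $i$ and comparing with the output of Lemma \ref{chiformula} produces the required equality of coefficients, and noting that $\ell_C(C/uC) = \ell_{A_\q}(A_\q/(\p,u)A_\q)$ completes the match with the right-hand side.

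The only real obstacle is bookkeeping: one must verify that the dimension hypothesis correctly identifies the primes $\Q$ contracting to $\q$ with the maximal ideals of $D$, and that the residue-field-degree factors survive intact under localization and passage to quotients. Once those identifications are made, Lemma \ref{chiformula} supplies the substantive input and the rest is a purely formal computation.
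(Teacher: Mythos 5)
Your proof is correct and follows essentially the same path as the paper's: reduce to $\eta = [B/\mathfrak{P}]$, compare coefficients at each relevant prime $\q$, invoke Lemma \ref{chiformula} for the torsion-free module $(B/\mathfrak{P})_\q$ over $(A/\p)_\q$, and then expand the resulting length via a residue-field filtration over the primes $\mathfrak{Q}$ of $B$ lying over $\q$. The only cosmetic difference is that you carry $\p$ and $\mathfrak{P}$ through the computation rather than passing immediately to the quotient $\mathfrak{P} = 0$ as the paper does.
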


\begin{proof}  It suffices to prove the result for a cycle of the form $[B/\frak P]$,
and in addition we may assume that $\frak P=0$.  (If $u\in \frak P$, then both cycles
are zero.)  The cycle $\Phi_*([B])$ is $r[A]$, where $r$ is the rank of $B$
as an $A$-module. Thus if $\frak q$ is a height one prime of $A$ containing $u$, the
coefficient of $[A/\frak q]$ in  $(u) \cap (\Phi_*([B]))$ is $\ell(A_{\frak q}/u A_{\frak q})$
times $r$, and $r$ is also the rank of $B_{\frak q}$ over $A_{\frak q}$.  By Lemma 
\ref{chiformula}, this is equal to
the length of $B_{\frak q}/uB_{\frak q}$ as an $A_{\frak q}$ module
(since in this case there are no nonzero elements
annihilated by $u$).  By taking a filtration of $B_{\frak q}/uB_{\frak q}$ with quotients
$B_{\frak q}/\frak Q B_{\frak q}$ for primes $\frak Q$ containing
$u$, we get
 $$\ell_{A_{\q}} (B_{\frak q}/uB_{\frak q})=\sum_{\frak Q}[\kappa(\frak Q): \kappa(\frak q)]\ell(B_{\frak Q}/
 uB_{\frak Q}).$$
The right hand side of this equation is the coefficient of $[A/\q]$ in $\Phi_*((u) \cap [B])$,
so this proves the lemma. 
\end{proof}
\medskip

We also need the following result, which is a special case of ``proper push-forward" of cycles.  If
the field $L$ is a finite extension of a field $K$, we denote the norm from $L$ to $K$
by $N_{L/K}$; recall that $N_{L/K}(x)$ is the determinant of the map given by
multiplication by $x$ on $L$ considered as a vector space over $K$.

\begin{lemma}\label{properpf}

Let $A$ be a local one-dimensional domain.
\begin{enumerate}  
\item  Let $M$ be a finitely generated
torsion-free $A$-module, and let $\phi$ be  an $A$-endomorphism of $M$
such that $Coker(\phi)$ has finite length.  Let $K$ be the quotient field of $A$, and
let $k=a/b$ be the determinant of the induced endomorphism on $M\otimes K$, where $a$ and
$b$ are in $A$. Then
$$\ell(Coker(\phi))=\ell(A/aA)-\ell(A/bA).$$

\item Let $B$ be an integral domain containing $A$ that is a finitely generated $A$-module, and set $L$ and $K$ to be their quotient fields, respectively.  Let $k$ be an element of $L$, and let $\Phi_*$ be defined as above.  Then 
$$\Phi_*(\div(B,k))=\div(A,N_{L/K}(k)).$$
\end{enumerate}
\end{lemma}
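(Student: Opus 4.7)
The plan is to establish part (1) first, then deduce part (2) by localizing at each height-one prime of $A$ and applying part (1) to multiplication-by-$b_i$ endomorphisms of $B$.

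For (1), first observe that $\phi$ is injective: $\det(\phi \otimes K) \neq 0$ makes $\phi_K$ injective, and $M$ is torsion-free so $M \hookrightarrow M \otimes_A K$. The engine of the argument is additivity of $\ell(\Coker(-))$ under composition: for injective endomorphisms $\phi_1, \phi_2$ of $M$ with finite-length cokernels, the map $\phi_1$ induces a short exact sequence
\[
0 \to \Coker(\phi_2) \xrightarrow{\phi_1} \Coker(\phi_1 \phi_2) \to \Coker(\phi_1) \to 0,
\]
(injectivity of the first map uses that $\phi_1$ is injective on $M$), giving $\ell(\Coker(\phi_1 \phi_2)) = \ell(\Coker \phi_1) + \ell(\Coker \phi_2)$. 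Combining this with Lemma \ref{chiformula} applied to multiplication by $b$ on $M$ (yielding $\ell(M/bM) = n\,\ell(A/bA)$, where $n = \operatorname{rank} M$), the composition $b\phi$ has determinant $b^{n-1}a \in A$, and the claim reduces to showing $\ell(\Coker \phi) = \ell(A/cA)$ when $\det(\phi_K) = c \in A$.

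For this reduced case, the base $n = 1$ is straightforward: a rank-one torsion-free $A$-module embeds as a fractional ideal $I \subset K$ on which $\phi$ acts as multiplication by $c$, and the chain $aI \subset bI \subset I$ (for $c = a/b$) combined with Lemma \ref{chiformula} gives $\ell(I/cI) = \ell(A/aA) - \ell(A/bA)$; in the reduced case where $c \in A$, this is $\ell(A/cA)$. For higher rank one picks $d \in \mathfrak{m}$ with $dM \subset \phi M$ (possible because $M/\phi M$ has finite length over a one-dimensional local ring), defines $\psi : M \to M$ by $\phi \psi = d \cdot 1_M$, and uses additivity together with the rank-one analysis applied to the determinant line $\wedge^n M$, on which $\wedge^n \phi$ acts by $\det(\phi_K) = c$, to conclude. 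The main obstacle is this rank-reduction step, which must be handled carefully since $A$ need not be a PID and $M$ need not be free, so one cannot directly invoke Smith normal form.

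For (2), localize at each height-one prime $\mathfrak{p}$ of $A$: after localization $A_\mathfrak{p}$ is one-dimensional local and $B_\mathfrak{p}$ is module-finite and torsion-free over it. Writing $k = b_1/b_2$ with $b_1, b_2 \in B$, multiplication by $b_i$ is an $A_\mathfrak{p}$-endomorphism of $B_\mathfrak{p}$ whose determinant over $K$ is, by the very definition of the norm, $N_{L/K}(b_i)$. Part (1) then yields $\ell_{A_\mathfrak{p}}(B_\mathfrak{p}/b_i B_\mathfrak{p}) = \ell(A_\mathfrak{p}/a_i A_\mathfrak{p}) - \ell(A_\mathfrak{p}/c_i A_\mathfrak{p})$ whenever $N_{L/K}(b_i) = a_i/c_i$ with $a_i, c_i \in A$. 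A composition-series filtration gives $\ell_{A_\mathfrak{p}}(B_\mathfrak{p}/b_i B_\mathfrak{p}) = \sum_{\mathfrak{P} \mid \mathfrak{p}} [\kappa(\mathfrak{P}) : \kappa(\mathfrak{p})]\,\ell_{B_\mathfrak{P}}(B_\mathfrak{P}/b_i B_\mathfrak{P})$, which is the coefficient of $[A/\mathfrak{p}]$ in $\Phi_*(\div(B, b_i))$. Subtracting the $i = 2$ contribution from the $i = 1$ contribution and using multiplicativity of the norm $N_{L/K}(k) = N_{L/K}(b_1)/N_{L/K}(b_2)$, the coefficient of $[A/\mathfrak{p}]$ in $\Phi_*(\div(B, k))$ matches that in $\div(A, N_{L/K}(k))$, completing the proof.
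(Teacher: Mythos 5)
Your treatment of part (2) is essentially the paper's: reduce to $k = b \in B$, localize at a height-one prime $\mathfrak p$, use part (1) to identify $\ell(B_{\mathfrak p}/bB_{\mathfrak p})$ with the coefficient of $[A/\mathfrak p]$ in $\div(A, N_{L/K}(b))$, and use a filtration to match this against the coefficient in $\Phi_*(\div(B,b))$.

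Part (1) is where you diverge, and there is a genuine gap. Your reduction to $c = \det(\phi_K) \in A$ by twisting $\phi$ by multiplication by $b$ and using additivity of $\ell(\Coker(-))$ under composition is correct, and the rank-one case via fractional ideals and the chain $aI \subseteq bI \subseteq I$ is fine. The problem is the higher-rank step. Picking $d \in \mathfrak m$ with $dM \subseteq \phi M$ and setting $\psi$ so that $\phi\psi = d\cdot 1_M$ gives only $\ell(\Coker\phi) + \ell(\Coker\psi) = n\,\ell(A/dA)$, but $\psi$ is again an endomorphism of the same rank-$n$ module, so there is no descent in rank and no visible induction. Appealing to the ``determinant line'' $\wedge^n M$ does not close the gap either: (i) the equality $\ell(\Coker\phi) = \ell(\Coker(\wedge^n\phi))$ for $M$ free is precisely the content of the lemma in that case --- proving it requires Smith normal form or an equivalent, so invoking it is circular; and (ii) for non-normal $A$ and non-free $M$, $\wedge^n M$ need not even be torsion-free, so the rank-one analysis doesn't apply to it directly. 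You flag this yourself as ``the main obstacle,'' and indeed it is the crux, not a detail to be handled carefully.

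The paper's proof avoids the obstruction by passing to the normalization $\overline A$ of $A$ (finite over $A$ by the standing hypotheses), replacing $M$ by $\overline M = \overline A M \subseteq M\otimes_A K$, and using the Snake Lemma on $0 \to M \to \overline M \to \overline M/M \to 0$ (where $\overline M/M$ has finite length and $\phi$ is injective on $M$ and $\overline M$) to show $\ell(\Coker_M \phi) = \ell(\Coker_{\overline M}\phi)$; the same reasoning gives $\ell(A/aA) = \ell(\overline A/a\overline A)$ and $\ell(A/bA) = \ell(\overline A/b\overline A)$. One is thus reduced to the case where $A$ is a semi-local Dedekind domain, hence a PID, so $\overline M$ is free and $\phi$ can be diagonalized, and the identity becomes transparent. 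If you want to salvage your line of attack, the right move is to incorporate this normalization step to reach the PID situation; the additivity and twist-by-$b$ ideas you already have then combine with Smith normal form to finish.
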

\begin{proof}  To prove (1), let $\overline A$ be the integral closure of $A$ in $K$,
which we are assuming is a finitely generated $A$-module, and let $\overline M$
be the $\overline A$-module generated by $M$ in $M\otimes_A K$.   
Then $\phi$ extends to an endomorphism
of $\overline M$ and thus also to an endomorphism of $\overline M/M$,
which has finite length. An application of the Snake Lemma
shows that the length of the cokernel of $\phi$ on $M$ is equal to
the length of the cokernel of its extension to $\overline M$
(we note that since $\Coker(\phi)$ has
finite length and $M$ is torsion-free, $\phi$ is injective).  Similarly,
the lengths of $A/aA$ and $A/bA$ are equal to the lengths of
$\overline A/a\overline A$ and $\overline A/b\overline A$.  Thus we may
assume that $A$ is integrally closed in its quotient field so is a semi-local Dedekind domain.  In this case $A$ is a principal ideal domain, so we can
diagonalize $\phi$ and the result is clear.

It suffices to prove (2) for $k = b\in B$, and from part (1) it
suffices to show that for $\p \in \Spec(A)$ of height one, the length of $B_{\p}/bB_{\p}$ is
equal to $$\sum_{\frak P} [\kappa(\frak P): \kappa(\p)]\ell_{B_{\frak P}}(B_{\frak P}/bB_{\frak P}),$$
where the sum is taken over all $\frak P$ lying over $\p$.  This formula
follows immediately from taking a filtration of $B_{\p}/bB_{\p}$ with
quotients of the form $B_{\frak P}/\frak P B_{\frak P}$.
\end{proof}

\begin{theorem}\label{redtonormal}  (Reduction to the normal case).
Let $u,v$ be elements of an integral domain $A$ of dimension $d$, and let $B$ be the normalization of $A$ in its quotient field.  If $(u) \cap (v) \cap [B]$ and $(v) \cap (u) \cap [B]$ are rationally equivalent in
$Z_{d-2}(B/(u,v))$, then $(u) \cap (v) \cap [A]$ and $(v) \cap (u) \cap [A]$ are rationally equivalent in
$Z_{d-2}(A/(u,v))$.
\end{theorem}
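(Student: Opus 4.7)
The plan is to push the hypothesized rational equivalence on $\Spec(B)$ forward to $\Spec(A)$ along the finite map $\Phi\colon\Spec(B)\to\Spec(A)$. Because $B$ is a finite $A$-module (by the standing hypothesis that normalizations are module-finite), the push-forward $\Phi_*$ defined in the discussion preceding Lemma~\ref{projection} is available, and the entire argument is essentially a formal application of the projection formula together with the norm compatibility of proper push-forward.

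First I would compute $\Phi_*([B]) = [A]$: since $A$ and $B$ share the same quotient field, the residue-field extension at the zero ideals has degree $1$. Two applications of Lemma~\ref{projection} (once for $u$, once for $v$, and symmetrically in the opposite order) then yield
\begin{align*}
\Phi_*\bigl((u)\cap(v)\cap[B]\bigr) &= (u)\cap(v)\cap[A], \\
\Phi_*\bigl((v)\cap(u)\cap[B]\bigr) &= (v)\cap(u)\cap[A].
\end{align*}
Subtracting, one obtains $(u)\cap(v)\cap[A] - (v)\cap(u)\cap[A] = \Phi_*(\eta)$, where $\eta = (u)\cap(v)\cap[B] - (v)\cap(u)\cap[B]$ is by hypothesis rationally equivalent to zero in $Z_{d-2}(B/(u,v))$.

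The remaining step is to verify that $\Phi_*$ carries rational equivalences to rational equivalences. Writing $\eta = \sum_j \div(\frak Q_j,k_j)$ with each $\frak Q_j\in\Spec(B)$ containing $(u,v)$ and satisfying $\dim(B/\frak Q_j) = d-1$, I would set $\frak q_j = A\cap\frak Q_j$. Since $B/\frak Q_j$ is finite over $A/\frak q_j$, contraction preserves the dimension stratum, so $\dim(A/\frak q_j)=d-1$ and $\frak q_j\supseteq(u,v)$. Lemma~\ref{properpf}(2), applied to the finite extension $A/\frak q_j\subset B/\frak Q_j$ with quotient fields $K_j$ and $L_j$, gives
$$\Phi_*\bigl(\div(\frak Q_j,k_j)\bigr) = \div\bigl(\frak q_j,\, N_{L_j/K_j}(k_j)\bigr),$$
a principal divisor on $A/\frak q_j$ and hence rationally equivalent to zero in $Z_{d-2}(A/(u,v))$. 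Summing over $j$ yields $\Phi_*(\eta) = 0$ in the Chow group of $A/(u,v)$, completing the proof. The main obstacle is merely the bookkeeping at the level of primes---confirming that contraction lands in the correct dimension stratum and that the hypotheses of Lemma~\ref{properpf}(2) apply to each quotient extension---but no deeper input is required beyond the two lemmas already established in Section~2.
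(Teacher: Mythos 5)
Your proposal is correct and follows essentially the same route as the paper: push the rational equivalence on $\Spec(B)$ forward along $\Phi$, using Lemma~\ref{projection} (projection formula) to identify $\Phi_*\bigl((u)\cap(v)\cap[B]\bigr)$ with $(u)\cap(v)\cap[A]$, and Lemma~\ref{properpf}(2) (norm compatibility of proper push-forward) to show $\Phi_*$ sends each $\div(\frak Q_j,k_j)$ to a principal divisor on $A/\frak q_j$. The only cosmetic difference is notational (the paper uses $\frak P_i$, $\p_i$ and carries the equality $\Phi_*([B])=[A]$ as a parenthetical remark), so the two arguments are the same in substance.
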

\begin{proof}  Let $\frak P_i$ be the height one prime ideals of $B$ in the support of $(u,v)$,
and let $\p_i$ be their intersections with $A$; we note that the $\p_i$ are exactly the height
one primes of $A$ that contain $(u,v)$.  Let $k_i$ be rational functions on
$B/\frak P_i$ such that we have an equality of cycles
$$(u) \cap (v) \cap [B]-(v) \cap (u) \cap [B] = \sum \div(\frak P_i,k_i).\eqno{(5)}$$
Now from Lemma \ref{projection}, we have
$$\Phi_*((u) \cap (v) \cap [B]) = (u) \cap \Phi_*((v) \cap [B])  = (u) \cap (v) \cap \Phi_*([B]),$$
and similarly 
$$\Phi_*((v) \cap (u) \cap [B]) = (v) \cap (u) \cap \Phi_*([B]) .$$
Since $\Phi_*([B]) = [A]$ (as $B$ is finitely-generated over $A$), applying $\Phi_*$ to the left hand side of equation (5) gives
$$(u)\cap (v) \cap [A] - (v)\cap (u) \cap [A].$$
On the other hand, if we apply $\Phi_*$ to the right hand side,
by Lemma \ref{properpf} we obtain 
$$\sum_{\frak P_i} \div(\p_i,N_{\kappa(\frak P_i)/\kappa(\p_i)}(k_i)).$$
This shows that $(u)\cap (v) \cap [A] - (v)\cap (u) \cap [A]$ is rationally equivalent to zero in
$Z_{d-2}(A/(u,v))$. 
\end{proof}

\bigskip
 
In summary, we may assume that $A$ is a normal domain and that the cycle we are
intersecting is $[A]$.  The reduction to dimension two occurs in the next section.

\section{A formula for the cycle $(u) \cap (v) \cap [A] - (v) \cap (u) \cap [A]$}

We begin by setting up the general situation we will be considering and then give a
formula for the difference of the cycles in terms of elements of the form
$\div(\frak p_i, k_i)$, for rational functions $k_i$ on $A/\p_i$.  The remainder of the paper is devoted to proving the formula.

Our situation is depicted below:

\xymatrix{
 & & \frak q'_1, \dots, \frak q'_e \ar@{-}[dr] & & \frak p_1, \dots, \frak p_r \ar@{-}[dl] \ar@{-}[dr]
  & & \frak q_1, \dots, \frak q_f \ar@{-}[dl] \\
   & & & u & & v }

All of the prime ideals shown are height one primes of $A$, and 
the $\frak q_k'$, $\frak p_i,$ and $\frak q_l$ are
those primes that contain only $u$, both $u$ and $v$, and only $v$, respectively.  Since $A$ is a normal
domain, the localization at every height one prime $\frak p$ is a discrete valuation ring and
defines a valuation $\nu_{\frak p}$. We let the orders of $u$ and $v$ at the primes displayed above be as follows:
$$\nu_{\frak q'_k}(u)=s_k \hskip.25in \nu_{\frak p_i}(u)=n_i  \hskip.25in \nu_{\frak p_i}(v)=m_i \hskip.25in
\nu_{\frak q_l}(v)=t_l$$
If $A$ has dimension $d$, the prime ideals $\frak p$ with $\dim(A/\frak p) = d-1$ in $A/(u,v)$ are the
images of the $\frak p_i$.  Hence to show that the cycle is rationally equivalent to zero,
we must show that it is a sum of cycles of the form $\div(\frak p_i,k_i)$.  Our main theorem (a more detailed statement of Theorem \ref{commutativity}) gives the rational functions that make this work.

\begin{theorem} \label{formula} Let $\frak p_i$ be the height one prime ideals of a Noetherian normal domain $A$ containing $u, v \in A$ as above.
If, for each $i$ between 1 and $r$, the pair of elements $a_i, b_i \in A$ is not in $\frak p_i$ and satisfies
$$\frac{a_i}{b_i} = \frac{v^{n_i}}{u^{m_i}},$$ then there is an equality of cycles
$$(u)\cap (v) \cap [A]- (v)\cap (u) \cap [A] = \sum_{i=1}^r \div(\frak p_i, a_i/b_i).\eqno{(6)}$$
\end{theorem}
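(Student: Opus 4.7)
The plan is to reduce the formula to the case where $A$ is a two-dimensional local normal domain and then prove it by induction on $r$, the number of common height-one primes of $u$ and $v$.

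First, since both sides of (6) are cycles of dimension $d - 2$ supported on height-two primes of $A$ containing $(u, v)$, it suffices to verify the coefficient of $[A/\Q]$ at each such $\Q$. Localizing at $\Q$, one may assume $A$ is a two-dimensional local normal domain with maximal ideal $\m = \Q$. The base case of the induction is $r = 0$: a two-dimensional normal local ring is Cohen--Macaulay (by Serre's $R_1+S_2$ criterion), so $u, v$ forms a regular sequence. A composition-series argument using the Euler characteristic $\chi_v(M) := \ell(M/vM) - \ell({}_vM)$, which is additive on short exact sequences where $v$ is a parameter and vanishes on finite-length modules, gives $\sum_k s_k \ell(A/(\q'_k, v)) = \ell(A/(u,v)) = \sum_l t_l \ell(A/(\q_l, u))$. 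Thus $(u) \cap (v) \cap [A]$ and $(v) \cap (u) \cap [A]$ agree as cycles (not merely rationally equivalent), and the right-hand side of (6) is the empty sum.

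For the inductive step, suppose $r \geq 1$ and that (6) holds for all pairs with fewer than $r$ common primes. By the Approximation Theorem, choose $w \in A$ with $\nu_{\p_1}(w) = m_1$ and $\nu_\p(w) = 0$ for every other height-one prime $\p$, and set $v' := v/w \in A$. The pair $(u, v')$ has exactly $r - 1$ common primes $\p_2, \ldots, \p_r$, so the inductive hypothesis applies. Writing $\Delta(u, v) := (u) \cap (v) \cap [A] - (v) \cap (u) \cap [A]$ and using the additivity $(v) \cap [A] = (v') \cap [A] + (w) \cap [A]$ of Weil divisors together with a careful tracking of correction terms (the cap-product $(v) \cap [A/\p]$ fails to be additive in $v$ only when $v \in \p$ while exactly one of $v', w$ lies in $\p$, which here occurs precisely at $\p = \p_i$), one computes
$$\Delta(u, v) \;=\; \Delta(u, v') \,+\, \Delta(u, w) \,+\, n_1 \div(\p_1, v') \,+\, \sum_{i \geq 2} n_i \div(\p_i, w).$$
On the right-hand side of (6), decompose each $v^{n_i}/u^{m_i}$ inside $\kappa(\p_i)^*$---as $(v')^{n_1} \cdot w^{n_1}/u^{m_1}$ for $i = 1$ and as $[(v')^{n_i}/u^{m_i}] \cdot w^{n_i}$ for $i \geq 2$, each factor being a unit in $A_{\p_i}$---and apply $\div(\p_i, fg) = \div(\p_i, f) + \div(\p_i, g)$ for $f, g \notin \p_i$ to get a parallel decomposition. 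Together with the inductive hypothesis $\Delta(u, v') = \sum_{i \geq 2} \div(\p_i, (v')^{n_i}/u^{m_i})$, this reduces the formula for $(u, v)$ to the identity $\Delta(u, w) = \div(\p_1, w^{n_1}/u^{m_1})$.

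To establish this last identity, repeat the Approximation trick symmetrically on $u$: pick $w' \in A$ with $\nu_{\p_1}(w') = n_1$ and trivial valuations elsewhere, and write $u = u' w'$ with $u' \in A$, $\nu_{\p_1}(u') = 0$. An analogous correction-term computation yields $\Delta(u, w) = \Delta(u', w) + \Delta(w', w) - m_1 \div(\p_1, u')$. Here $\Delta(u', w) = 0$ by the base case ($u'$ and $w$ share no height-one primes), and $\Delta(w', w) = 0$ immediately, since both $(w') \cap (w) \cap [A] = n_1\,(w') \cap [A/\p_1]$ and $(w) \cap (w') \cap [A] = m_1\,(w) \cap [A/\p_1]$ vanish (the intersecting element lies in $\p_1$ in each case). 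Since $w$ and $w'$ are both supported only at $\p_1$, the ratio $w^{n_1}/(w')^{m_1}$ has vanishing valuation at every height-one prime of $A$ and is therefore a unit of $A$; writing $w^{n_1}/u^{m_1} = [w^{n_1}/(w')^{m_1}] \cdot (u')^{-m_1}$ then gives $\div(\p_1, w^{n_1}/u^{m_1}) = -m_1 \div(\p_1, u')$, completing the identification $\Delta(u, w) = \div(\p_1, w^{n_1}/u^{m_1})$ and closing the induction. The principal technical obstacle will be the careful bookkeeping of the correction terms under the splittings $v = v'w$ and $u = u'w'$, together with the verification that the multiplicative decompositions of each $v^{n_i}/u^{m_i}$ in $\kappa(\p_i)^*$ are valid.
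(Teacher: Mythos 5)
Your proposed induction is structured differently from the paper's (which first proves the case $m_i = n_i$ for all $i$ by a length computation on $P/(u,v)A$ with $P = \cap \p_i^{(n_i)}$, then derives the single-prime and swap corollaries, and only then inducts via a three-term decomposition). But there is a genuine gap in your argument that prevents it from going through.

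The gap is in your use of the Approximation Theorem. You choose $w \in A$ with $\nu_{\p_1}(w) = m_1$ and $\nu_\p(w) = 0$ for \emph{every} other height-one prime $\p$, and later $w'$ with $\div(w') = n_1[A/\p_1]$. The Approximation Theorem, as stated in the paper (and in \cite[12.6]{M}), only guarantees \emph{nonnegative} valuations $\nu_\q(x) \geq 0$ at primes $\q$ outside the prescribed finite set; it gives no control forcing them to vanish. The existence of an element with $\div(w) = m_1[A/\p_1]$ is equivalent to $\p_1^{(m_1)}$ being principal, which fails in general normal domains: for example in $A = k[[x,y,z]]/(xy - z^2)$ the prime $\p = (x,z)$ has nonprincipal $\p^{(n)}$ for every odd $n$. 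So the elements $w$, $w'$ you need do not in general exist. This is fatal to the reduction, because your decomposition $v = v'w$ requires $v' = v/w \in A$; if $w$ is forced (as the Approximation Theorem actually allows) to pick up positive valuations at extra height-one primes $J_h$ not in the support of $v$, then $\nu_{J_h}(v') < 0$ and $v' \notin A$, so neither the additivity $(v)\cap[A] = (v')\cap[A] + (w)\cap[A]$ nor the subsequent cap-product bookkeeping makes sense. The symmetric step with $w'$ has the same defect, and so does your deduction that $w^{n_1}/(w')^{m_1}$ is a unit. This is precisely the difficulty the paper confronts: the elements $a_1, b_1$ in its argument are allowed to acquire extra zeros at primes $J_h$, and Corollary 4.3 and equation (8) are built specifically to track the contribution of those uncontrolled primes. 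Your correction-term identity for $\Delta(u,v)$ is correct \emph{granted} the idealized $w$, but without it the proof does not close; to repair it you would need to reintroduce the $J_h$ and carry them through, which is essentially what the paper does.
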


By the Approximation Theorem, there always exists elements $a_i$ and $b_i$ in $A \backslash \frak p_i$ such that $a_i/b_i = v^{n_i}/u^{m_i}$.  In the course of the proof we will give a particular choice of $a_i$ and $b_i$, but we note that the cycle
on the right is independent of the choice as long as the elements satisfy the hypotheses of the theorem; i.e., $\div(\frak p_i, a_i/b_i) = \div(\frak p_i, c_i/d_i)$ whenever $a_i/b_i = c_i/d_i$ and $a_i, b_i, c_i, d_i \notin \frak p_i$.

We also note that since this is an equality of cycles, it is enough to check that the coefficient
of $[A/\frak m]$ is the same for both sides of the equation for every prime
ideal $\frak m$ of height two.  Thus, by localizing we may
assume that $A$ is a local normal domain of dimension two and that $\frak m$ is its maximal ideal.

In summary then, to establish (6) we show the following equality:
\vskip.25in

\xymatrix{
 & *+[F]{\displaystyle{\sum_{l=1}^f t_l\ell(A/(\q_l,u))[A/\frak m]-\sum_{k=1}^e s_k\ell(A/(\q'_k,v)) [A/\frak m]= 
\sum_{i=1}^r  \div(\p_i,a_i/b_i)},}}
\vskip.25in

where we will often omit writing the basis element $[A/\frak m]$ on the left hand side and use $\div(\p_i, a_i/b_i)$ to denote the coefficient of $[A/\frak m]$ on the right hand side.

\section{First Step in the Induction Argument}

From this point on, we assume that $A$ is a local normal domain 
of dimension two and that the elements $u, v$
of $A$
intersect in codimension one.  We remark that in the case where $u$ and $v$ generate a height two ideal, since we are assuming that $A$ is a normal
domain, $u,v$ form a regular sequence and hence both $(u) \cap (v) \cap [A]$ and $(v) \cap (u) \cap [A]$ give the length of $A/(u,v)$.  As a result, the right hand side of equation (6) is zero.  In the case we are considering, where
$u$ and $v$ generate a height one ideal, $A/(u,v)$ no longer has finite length, but this
quotient, or more precisely a subquotient, is still the starting point for the computation.

In this section we prove the special case where
 $m_i = n_i$ for each $i$, which, as we show below, implies the case of a single prime.   We recall that $m_i=\nu_{\frak p_i}(v)$ and $n_i=\nu_{\frak p_i}(u)$, so the 
assumption says that $u$ and $v$ have the same order for each $\frak p_i$. As a result, only one pair of elements $a,b \in A$ is necessary.  In the next section we will
 prove the general case by using this one.

\begin{theorem} \label{mequalsn} Let $\frak p_i$ and $u, v$ be as in \ref{formula}.  If $n_i=m_i$ for all $i$, and $a$ and $b$ are elements of $A$ not in any of the $\frak p_i$ such that $a/b = v/u$, then we have an equality of cycles
$$(u)\cap (v) \cap [A]- (v)\cap (u) \cap [A] = \sum_{i=1}^r \div \left(\frak p_i, \frac{a^{n_i}}{b^{n_i}} \right).$$
\end{theorem}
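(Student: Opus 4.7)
The plan is to exploit the Cohen--Macaulay property of the two-dimensional normal local domain $A$ together with the Approximation Theorem to compute both sides via the Euler-characteristic formula of Lemma~\ref{chiformula}. After choosing convenient representatives $a,b$, the identity will split into a part matching the right-hand side and an ``error'' term that must be shown to vanish.

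First I would apply the Approximation Theorem to pick $b\in A$ with $\nu_{\p_i}(b)=0$ for every $i$, $\nu_{\q_l}(b)=0$ for every $l$, and $\nu_{\q'_k}(b)=s_k$ for every $k$, while requiring $\nu_\q(b)\ge 0$ at every other height-one prime. Because $m_i=n_i$, the element $a:=bv/u$ then also lies in $A$, with $\nu_{\p_i}(a)=0$, $\nu_{\q_l}(a)=t_l$, $\nu_{\q'_k}(a)=0$, and $\nu_{\mathfrak r}(a)=\nu_{\mathfrak r}(b)=:e_{\mathfrak r}\ge 0$ on any remaining primes $\mathfrak r$. The auxiliary effective divisor $\sum_{\mathfrak r}e_{\mathfrak r}\mathfrak r$ is forced upon us precisely when the Weil divisors $\sum_k s_k\q'_k$ and $\sum_l t_l\q_l$ fail to be principal.

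By construction neither pair $(u,a)$ nor $(v,b)$ has a common height-one prime, so each generates an ideal of height two in the Cohen--Macaulay ring $A$ and therefore forms a regular sequence; in particular $\ell(A/(u,a))$ and $\ell(A/(v,b))$ are finite. Computing $\ell(A/(u,a))$ in two ways via Lemma~\ref{chiformula}, first by filtering $A/uA$ by its primary components and applying $\chi(-,a)$, and second by filtering $A/aA$ and applying $\chi(-,u)$, yields
\[
\sum_i n_i\ell(A/(\p_i,a))+\sum_k s_k\ell(A/(\q'_k,a)) \;=\; \ell(A/(u,a)) \;=\; \sum_l t_l\ell(A/(\q_l,u))+\sum_{\mathfrak r}e_{\mathfrak r}\ell(A/(\mathfrak r,u)),
\]
and the analogous double evaluation of $\ell(A/(v,b))$ exchanges the roles of $u$ and $v$. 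Subtracting and regrouping these four identities rewrites the left-hand side of Theorem~\ref{mequalsn} as
\[
\sum_i n_i\bigl[\ell(A/(\p_i,a))-\ell(A/(\p_i,b))\bigr]
+\Bigl[\sum_k s_k\ell(A/(\q'_k,a))-\sum_l t_l\ell(A/(\q_l,b))\Bigr]
-\sum_{\mathfrak r}e_{\mathfrak r}\bigl[\ell(A/(\mathfrak r,u))-\ell(A/(\mathfrak r,v))\bigr],
\]
whose first sum is exactly the target $\sum_i\div(\p_i,a^{n_i}/b^{n_i})$.

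The main obstacle is to show the remaining two bracketed expressions cancel. A direct computation identifies the second bracket with $(a)\cap(b)\cap[A]-(b)\cap(a)\cap[A]$ and the third with $\sum_{\mathfrak r}e_{\mathfrak r}\div(\mathfrak r,u/v)$, so the required cancellation is precisely the statement of Theorem~\ref{mequalsn} applied to the new pair $(a,b)$: the common height-one primes of $a,b$ are the $\mathfrak r$, they satisfy the hypothesis $\nu_{\mathfrak r}(a)=\nu_{\mathfrak r}(b)=e_{\mathfrak r}$, and $u,v\in A$ themselves serve as rational representatives since $b/a=u/v$. When the auxiliary divisor happens to vanish---as can be arranged whenever $\sum_k s_k\q'_k$ is principal---the pair $(a,b)$ is already a regular sequence, both brackets equal $\ell(A/(a,b))$ by Lemma~\ref{chiformula}, and the cancellation is automatic; the general case is then handled by induction, taking this $D=0$ situation as the base and reducing the size of the auxiliary divisor at each step.
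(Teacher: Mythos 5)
Your setup is careful and the first three-quarters of the argument is correct: choosing $b$ by the Approximation Theorem so that $\div(b)=\sum_k s_k\,\q'_k+\sum_{\mathfrak r}e_{\mathfrak r}\mathfrak r$ and setting $a=bv/u$, the pairs $(u,a)$ and $(v,b)$ are regular sequences in the two-dimensional Cohen--Macaulay ring $A$, and computing $\ell(A/(u,a))$ and $\ell(A/(v,b))$ two ways (using additivity of $\chi$ on prime filtrations, as in Lemma~\ref{chiformula}) does give exactly the displayed three-term decomposition, with the first sum equal to $\sum_i\div(\p_i,a^{n_i}/b^{n_i})$. The identification of the second bracket with $(a)\cap(b)\cap[A]-(b)\cap(a)\cap[A]$ and of the third with $\sum_{\mathfrak r}e_{\mathfrak r}\div(\mathfrak r,u/v)$ is also correct.

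The gap is the final step. You observe that the required cancellation is \emph{exactly} the statement of Theorem~\ref{mequalsn} applied to the new pair $(a,b)$ with common primes $\mathfrak r$ and representatives $(u,v)$ for $b/a$. This makes the argument circular: you are invoking the theorem being proved, for an instance that is not structurally any simpler. Your proposed remedy --- induct on ``the size of the auxiliary divisor'', with base case $D=0$ --- is not substantiated and does not appear to be repairable as stated: when you rerun the construction for the pair $(a,b)$, the new auxiliary divisor $D'$ is merely linearly equivalent to $D$ (both represent the class of $\sum_k s_k\q'_k\sim\sum_l t_l\q_l$), and there is no reason its degree, number of components, or any other quantity should strictly decrease; it is simply a fresh choice furnished by the Approximation Theorem. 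Moreover $D=0$ is achievable only when $\sum_k s_k\q'_k$ is principal, which you cannot arrange in general. The paper sidesteps this entirely by never needing to cancel an auxiliary contribution: rather than computing $\ell(A/(u,a))$ and $\ell(A/(v,b))$ (which are genuinely unequal), it computes the two cokernels in the composite $P^{(-1)}/A\xrightarrow{\,u\,}A/P\xrightarrow{\,a\,}A/P$ via the kernel--cokernel sequence and then uses the \emph{single element} identity $ua=vb$, so that $\ell(\Coker(ua))=\ell(\Coker(vb))$ is tautological and the auxiliary primes $J_h$ never have to be analyzed. To close your gap you would need either a direct proof of the identity $\sum_k s_k\ell(A/(\q'_k,a))-\sum_l t_l\ell(A/(\q_l,b))=\sum_{\mathfrak r}e_{\mathfrak r}\bigl[\ell(A/(\mathfrak r,u))-\ell(A/(\mathfrak r,v))\bigr]$ --- which is essentially the same difficulty as the original theorem --- or to restructure along the lines of the paper so that the relation $ua=vb$ does the work.
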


\begin{proof}  Let $P=\cap_{i=1}^r\p_i^{(n_i)}$.  Then $u$ and $v$ are in the ideal $P$ and, since $\nu_{\p}(u)=\nu_{\p}(P)$ or $\nu_{\p}(v)=\nu_{\p}(P)$
for all height one prime ideals $\p$ of $A$, $P/(u,v)A$ is a module of finite length.  Our proof consists of expressing
the length of this module in different ways.

Let $Q=\cap_{l=1}^f\q_l^{(t_l)}$ and $Q'=\cap_{k=1}^e{\q'_k}^{(s_k)}$.  
We note that $Q\cap P=vA$ and $Q'\cap P=uA$.

We claim that we have a short exact sequence
$$0\to A/(Q + P)\stackrel{u}{\to}P/(vA+uP)\to P/(u,v)A \to 0.$$
To see this, we note that if $a\in Q$, then $ua\in vA$, so multiplication by $u$ does take
$Q + P$ to $vA + uP$.  Conversely, if $ua = va' + up$, for $p \in P$, then $u(a - p) \in vA$.  This happens
exactly when $a - p \in Q$, which implies that $a\in Q + P$. It is clear that the image
of this map is $(u,v)A/(vA + uP)$, so exactness at the other places holds.

Interchanging $u$ and $v$ yields a similar short exact sequence.  Combining these, we deduce that
$$\ell(P/(vA+uP))-\ell(A/(Q+P))= \ell(P/(uA+vP))-\ell(A/(Q' + P)).$$
Consider the term $\ell(P/(vA+uP))$.  The height one prime ideals in the support of
$P/vA$ are the $\q_l$.  Since $u$ is not contained in any of these, we determine that 
multiplication by $u$ on $P/vA$ is injective; its cokernel is  $P/(vA+uP)$.
Furthermore, since $P/vA$ has a filtration with quotients $A/\q_l$ of multiplicity
$t_l$, we obtain
$$\ell(P/(vA+uP))=\sum_{l=1}^f t_l\ell(A/(\q_l,u)).$$
Similarly, we have
$$\ell(P/(uA+vP))=\sum_{k=1}^e s_k\ell(A/(\q'_k,v)).$$
Combining these terms, we obtain
$$\sum_{l=1}^f t_l\ell(A/(\q_l,u))-\sum_{k=1}^e s_k\ell(A/(\q'_k,v))=\ell(P/(vA+uP))-\ell(P/(uA+vP)),$$
and from the previous equation this difference is equal to
$$\ell(A/(Q+P))-\ell(A/(Q'+P)).$$
It now remains to prove that if we have $a$ and $b$ not in $\p_i$ for any $i$ with
$a/b=v/u$, then
$$\ell(A/(Q+P))-\ell(A/(Q'+P))=\sum_{i=1}^r  \div \left(\p_i,\frac{a^{n_i}}{b^{n_i}} \right).$$
From the Approximation Theorem, we can find an element $a \in A$ such
that $a$ avoids all the $\frak p_i$ and $\frak q'_k$, but $\nu_{\frak q_l}(a)=\nu_{\frak q_l}(v) = t_l$ for 
all $l$.  Additionally, we might have $a \in J_h$ for a finite collection of height one primes
  $J_h$.  Let $\lambda_h$ be the order of $a$ in $J_h$.  Set $b = ua/v$.  Then $b \in A$.  In particular, $b$ avoids every $\frak p_i$ and $\frak q_l$, $\nu_{\frak q_k'}(b)= \nu_{\frak q_k'}(u) = s_k$ for every $k$, and $\nu_{J_h}(b)= \lambda_h$ for all $h.$

Let $K$ be the quotient field of $A$.  We next consider the composition of multiplications, $$P^{(-1)}/A\stackrel{u}{\to}A/P\stackrel{a}{\to}A/P,$$

where $P^{(-1)} = \cap_{i=1}^r \frak p_i^{(-n_i)}$ and $\frak p_i^{(-n_i)} = \{x \in K: \nu_{\p_i}(x) \geq -n_i \}$.

The kernel-cokernel exact sequence gives us a short exact sequence
$$0\to \Coker(u)\to \Coker(ua)\to \Coker(a)\to 0.$$
The first cokernel is $A/(P+uP^{(-1)}) = A/(P+Q').$  The length of the cokernel of multiplication by $a$ on 
$A/P$ is, by looking at a filtration of $A/P$ with quotients of the form $A/\p_i$,
$$\sum_{i=1}^r n_i \ell(A/(\p_i, a)) = \sum_{i=1}^r n_i \div(\p_i,a).$$
Thus the above short exact sequence gives 
$$\ell(\Coker(ua))=\ell(A/(P+Q'))+\sum_{i=1}^r n_i \div(\p_i,a)=\ell(A/(P+Q'))+\sum_{i=1}^r \div(\p_i,a^{n_i}).$$
A similar computation gives
$$\ell(\Coker(vb))=\ell(A/(P+Q))+\sum_{i=1}^r n_i \div(\p_i,b)=\ell(A/(P+Q))+\sum_{i=1}^r \div(\p_i,b^{n_i}).$$
Since $ua=vb$, we obtain
$$\ell(A/(P+Q))-\ell(A/(P+Q')) = \sum_{i=1}^r \div(\p_i,a^{n_i})-\sum_{i=1}^r \div(\p_i,b^{n_i})
=\sum_{i=1}^r \div(\p_i, \frac{a^{n_i}}{b^{n_i}}),$$
which proves the theorem.
\end{proof}

As mentioned, the above argument implies the case in which there is only one height one prime $\p$ over $(u,v)$.  This will establish the first step in the induction argument.

\begin{corollary} \label{oneprime} With the same hypotheses of \ref{formula} and $r=1$, we have an equality of cycles
$$(u)\cap (v) \cap [A]- (v)\cap (u) \cap [A] = \div \left(\frak p, \frac{a}{b} \right).$$
\end{corollary}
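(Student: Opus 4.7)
The plan is to reduce Corollary \ref{oneprime} to Theorem \ref{mequalsn} by means of the substitution $u \mapsto u^m$, $v \mapsto v^n$, where $n = \nu_{\p}(u)$ and $m = \nu_{\p}(v)$. Since $\nu_{\p}(u^m) = nm = \nu_{\p}(v^n)$, the pair $(u^m, v^n)$ satisfies the equal-valuation hypothesis at the unique height one prime $\p$ lying over $(u,v)$. Because taking powers does not change which primes contain an element, the configuration of Section 3 is preserved with $r = 1$; moreover the same elements $a, b$ still avoid $\p$ and, in the new setup, satisfy $a/b = v^n/u^m$, which is precisely the hypothesis of Theorem \ref{mequalsn} applied to $(u^m, v^n)$.

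Applying Theorem \ref{mequalsn} to $(u^m, v^n)$ with common valuation $nm$ and this pair $a, b$ yields
$$(u^m) \cap (v^n) \cap [A] - (v^n) \cap (u^m) \cap [A] \;=\; \div\bigl(\p,\, a^{nm}/b^{nm}\bigr) \;=\; nm \cdot \div(\p, a/b).$$
On the other hand, a direct computation from Definition \ref{map} gives the multiplicativity identity $(u^m) \cap (v^n) \cap [A] = nm \cdot \bigl((u) \cap (v) \cap [A]\bigr)$: at every height one prime $\q$ containing $u$ one has $\ell(A_{\q}/u^m A_{\q}) = m \nu_{\q}(u)$, so $(u^m) \cap [A] = m \cdot (u) \cap [A]$, and then applying $(v^n) \cap -$ contributes an additional factor of $n$ at each $\q$ with $v \notin \q$ (and zero at each $\q$ with $v \in \q$). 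The same identity holds after swapping $u$ and $v$. Substituting both into the previous display gives
$$nm \,\bigl((u) \cap (v) \cap [A] - (v) \cap (u) \cap [A]\bigr) \;=\; nm \cdot \div(\p, a/b)$$
as an equation in $Z_0(A/(u,v)) = \mathbb{Z}\cdot [A/\m]$.

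Since $nm > 0$ and $\mathbb{Z}\cdot[A/\m]$ is torsion-free, we cancel $nm$ to obtain the desired identity. I do not expect a serious obstacle: Theorem \ref{mequalsn} is doing all the substantive work, and the remaining content is the bilinearity of the twofold intersection, immediate from the definitions. The one point that merits care is verifying the multiplicativity identity, but it reduces to the elementary DVR fact $\ell(A_{\q}/u^m A_{\q}) = m\,\ell(A_{\q}/u A_{\q})$ together with the observation that $(v^n)$ kills the contribution from any prime already containing $v$.
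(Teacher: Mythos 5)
Your proposal is correct and takes essentially the same route as the paper: apply Theorem \ref{mequalsn} to the pair $(u^m, v^n)$, which has equal valuations $nm$ at $\p$, and then cancel the factor of $nm$. The paper compresses the final step into the phrase ``which simplifies to the one shown''; you merely unpack that simplification, verifying $(u^m)\cap(v^n)\cap[A] = nm\cdot(u)\cap(v)\cap[A]$ via the additivity $\div(\p, xy)=\div(\p,x)+\div(\p,y)$ and cancelling $nm$ in the torsion-free group $\mathbb{Z}\cdot[A/\m]$.
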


We apply the previous argument to $u^m$ and $v^n$, where $\nu_\p(u)=n$ and $\nu_\p(v)=m$.  In this case, $P = \frak p^{(mn)}$, $Q = \cap_{l=1}^f \frak q_l^{(mt_l)}$, $Q' = \cap_{k=1}^e {\frak q'_k}^{(ns_k)}$, and $a/b = v^n/u^m.$  The resulting equality of cycles is $$(u^m)\cap (v^n) \cap [A]- (v^n)\cap (u^m) \cap [A] = \div \left(\frak p, \frac{a^{mn}}{b^{mn}} \right),$$
which simplifies to the one shown.

There is another important application of Theorem \ref{mequalsn}.  With the notation as above, the roles of the pairs $\{u,v\}$ and $\{a,b\}$ can be interchanged.  Of course, as a result the ideals $\frak p_i$ and $J_h$ must also swap roles.  

\begin{corollary}  \label{ab} With the same notation as in the proof of \ref{mequalsn}, we have
$$(b)\cap (a) \cap [A]- (a) \cap (b) \cap [A] = \sum_h \div \left(J_h, \frac{v^{\lambda_h}}{u^{\lambda_h}} \right).$$
\end{corollary}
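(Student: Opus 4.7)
The plan is to apply Theorem \ref{mequalsn} a second time, but now with $(a,b)$ playing the role of $(u,v)$ and with the $J_h$ taking over the role of the common primes $\p_i$. The essential point is that the pair $(a,b)$ automatically falls within the ``equal order'' hypothesis of Theorem \ref{mequalsn}: by construction $\nu_{J_h}(a) = \nu_{J_h}(b) = \lambda_h$, so the common valuation $\lambda_h$ plays the role of $n_i = m_i$.

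First I would identify the height-one supports of $a$ and $b$ to see which primes take over the roles of $\q'_k$, $\p_i$, and $\q_l$. From the construction in the proof of \ref{mequalsn}, $a$ has positive valuation exactly at the $\q_l$ (with order $t_l$) and the $J_h$ (with order $\lambda_h$); using $b = ua/v$ one checks that $b$ has positive valuation exactly at the $\q'_k$ (with order $s_k$) and the $J_h$ (with order $\lambda_h$). Consequently, when $(a,b)$ takes on the role of $(u,v)$, the common primes become the $J_h$, the ``only-$a$'' primes are the $\q_l$, and the ``only-$b$'' primes are the $\q'_k$; the two sets of one-sided primes trade places.

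Next, to invoke Theorem \ref{mequalsn} I need elements $\alpha,\beta \in A$ with $\alpha,\beta \notin J_h$ for any $h$ and $\alpha/\beta = b/a$. Since $a/b = v/u$ gives $b/a = u/v$, the obvious candidates are $\alpha = u$ and $\beta = v$. These work provided $u,v \notin J_h$ for each $h$; but the $J_h$ were chosen as height-one primes distinct from every $\p_i$, $\q_l$, and $\q'_k$, and those three families exhaust the height-one primes of $A$ containing either $u$ or $v$. So $u$ and $v$ are legitimate representatives.

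With all identifications in place, Theorem \ref{mequalsn} applied to $(a,b)$ delivers
$$(a) \cap (b) \cap [A] - (b) \cap (a) \cap [A] = \sum_h \div\!\left(J_h, \frac{u^{\lambda_h}}{v^{\lambda_h}}\right),$$
and swapping sides (which inverts the ratio inside each $\div$) gives precisely the claimed formula. The only real obstacle is bookkeeping: keeping the swapped roles of the primes straight and verifying the key fact that $u$ and $v$ avoid every $J_h$, which ensures that the naive choice $\alpha = u$, $\beta = v$ satisfies the hypothesis of \ref{mequalsn}.
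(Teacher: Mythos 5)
Your proposal is correct and matches the paper's own (very terse) argument, which simply notes that ``the roles of the pairs $\{u,v\}$ and $\{a,b\}$ can be interchanged'' and that the $\frak p_i$ and $J_h$ swap roles accordingly. You spell out the bookkeeping the paper leaves implicit---in particular, the verification that $u,v \notin J_h$, which is exactly what makes $\alpha=u$, $\beta=v$ a legitimate choice of helper elements for the re-application of Theorem~\ref{mequalsn}.
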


\section{The General Induction Argument}

We are now in a position to prove the general result.  In the previous section we established this result in two cases, and the condition that made these proofs possible was that the ratios
$n_i/m_i$ were the same for all $i$, or in the case of Corollary \ref{oneprime}, that there was
only one $i$.  In the general case this will not hold.  The general proof is
by induction on the number of primes of height one containing $(u,v)$.  Since the ratios 
$n_i/m_i$ and $n_j/m_j$ are not necessarily the same for different $i$ and $j$, the numbers $n_im_j - m_in_j$ will not all be zero, and this will effect our choice of $a_i$ and $b_i$.

We now prove our theorem in general.

\begin{proof}
Assume that $r \geq 2$ and that the result holds when there are $r-1$ primes.  Specifically, our induction hypothesis is: {\it Given a pair of elements $x$ and $y$ in 
$A$ that intersect in some} proper {\it subset $\mathcal S$ of $\frak p_1, \dots, \frak p_r$, there is an equality of cycles $$(x)\cap (y) \cap [A]- (y)\cap (x) \cap [A] = \sum_{\frak p_i \in \mathcal S} \div \left(\frak p_i, \frac{c_i}{d_i} \right),$$
for elements $c_i, d_i$ not in $\frak p_i$ such that $c_i/d_i = y^{\nu_{\frak p_i}(x)}/x^{\nu_{\frak p_i}(y)}$.}

As in Corollary \ref{oneprime}, we want to use the Approximation Theorem to find elements $a_1, \dots, a_r$ and $b_1, \dots, b_r$ such that for each $i$, 
$$\displaystyle{\frac{a_i}{b_i} = \frac{v^{n_i}}{u^{m_i}}}.$$  
After a possible reordering of the primes $\frak p_i$, we may assume that 
$$n_1/m_1 \geq n_2/m_2 \geq \cdots \geq n_r/m_r.$$
Let $G$ be the integer such that 
$n_1/m_1 = n_2/m_2 = \cdots = n_G/m_G > n_{G+1}/m_{G+1}$; then $1\le G < r$.
For $j \geq 1$, set $\alpha_j = n_1m_j - m_1n_j$. We have $\alpha_1 = \cdots = \alpha_G = 0$,
and $\alpha_j>0$ for $j\ge G+1$.

Using the Approximation Theorem, choose $a_1$ such that 
$$\div(a_1) = \sum_{j=G+1}^r \alpha_j [A/\frak p_j] + \sum_{l=1}^f n_1t_l [A/\frak q_l] + \sum_h \lambda_h [A/J_h],$$
where the $J_h$ are a finite number of height one primes of $A$ and $\lambda_h > 0$.  Set 
$\displaystyle{b_1 = \frac{u^{m_1}a_1}{v^{n_1}}}$.  Then  $\div(b_1) =  \sum_{k=1}^e m_1s_k [A/\frak q_k'] +
\sum_h \lambda_h [A/J_h]$.  It is important to note that $a_1$ and $b_1$ do not intersect on any of the 
original primes $\frak p_j, \frak q_l, \frak q_k' $; they only intersect on the primes $J_h$ and their orders are equal for each $h$.  This is exactly the scenario of Corollary \ref{ab}, using the relation $v^{n_1}/u^{m_1} = a_1/b_1$.  The explicit formula from Corollary \ref{ab} is shown below.
$$(b_1) \cap (a_1) \cap [A] - (a_1) \cap (b_1) \cap [A] =  \sum_h \div \left(J_h, \frac{v^{n_1 \lambda_h}}{u^{m_1\lambda_h}} \right)\eqno{(7)}$$
Moreover, a direct calculation shows it is also true that $$(v^{n_1}) \cap (a_1) \cap [A] - (u^{m_1}) \cap (b_1) \cap [A] =  \sum_h \div \left(J_h, \frac{v^{n_1 \lambda_h}}{u^{m_1\lambda_h}} \right). \eqno{(8)}$$
(Note that, on the left-hand side of (8), if we first intersect with $a_1$ or $b_1$, both of which are contained in some subset of the $\frak p_j, \frak q_l, \frak q'_k$, and $J_h$, followed by intersection with $v$ or $u$, the only elements that do not map to zero are the $[A/J_h]$.)



\begin{lemma}\label{threeterms} $ (u^{m_1}) \cap (v^{n_1}) \cap [A] - (v^{n_1}) \cap (u^{m_1}) \cap [A] =$

\hskip2in $ (u^{m_1}) \cap (a_1) \cap [A] - (a_1) \cap (u^{m_1}) \cap [A]$

\hskip2in $ + (b_1) \cap (v^{n_1}) \cap [A] - (v^{n_1}) \cap (b_1) \cap [A] $

\hskip2in $+ (a_1) \cap (v^{n_1}) \cap [A] - (b_1) \cap (u^{m_1}) \cap [A].$  
\end{lemma}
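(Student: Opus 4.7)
My plan is to derive Lemma 5.1 as a formal consequence of the elementary scalar relation $u^{m_1} a_1 = v^{n_1} b_1$ (which holds by construction, since $b_1 = u^{m_1} a_1 / v^{n_1}$), combined with equations (7) and (8), both of which are established just before the lemma statement.

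The first step is to convert the scalar relation into an equality of cycles. Since $\nu_{\p}$ is additive on products, we obtain
$$(u^{m_1}) \cap [A] + (a_1) \cap [A] = (v^{n_1}) \cap [A] + (b_1) \cap [A].$$
The second step is to intersect both sides of this identity with $(X)$ for each of the four choices $X \in \{u^{m_1}, v^{n_1}, a_1, b_1\}$. The key observation here is that $(X) \cap (X) \cap [A] = 0$ for any $X$, because the cycle $(X) \cap [A]$ is supported on primes containing $X$, and the operator $(X) \cap -$ kills each such basis element by Definition \ref{map}. Thus each of the four resulting equations loses its "diagonal" term; for instance, taking $X = u^{m_1}$ yields
$$(u^{m_1}) \cap (v^{n_1}) \cap [A] = (u^{m_1}) \cap (a_1) \cap [A] - (u^{m_1}) \cap (b_1) \cap [A],$$
and the choices $X = v^{n_1}$, $X = a_1$, $X = b_1$ yield analogous expressions for $(v^{n_1}) \cap (u^{m_1}) \cap [A]$, $(a_1) \cap (u^{m_1}) \cap [A]$, and $(b_1) \cap (v^{n_1}) \cap [A]$ respectively.

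The third step is to substitute the first two of these identities into the LHS of the lemma and the latter two into the RHS. After the resulting cancellations, LHS $-$ RHS collapses to
$$\bigl[(v^{n_1}) \cap (a_1) \cap [A] - (u^{m_1}) \cap (b_1) \cap [A]\bigr] - \bigl[(b_1) \cap (a_1) \cap [A] - (a_1) \cap (b_1) \cap [A]\bigr].$$
By equation (8), the first bracket equals $\sum_h \div\bigl(J_h, v^{n_1\lambda_h}/u^{m_1\lambda_h}\bigr)$; by equation (7), the second bracket equals the same sum. Hence LHS $-$ RHS $= 0$, which is the claim.

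The argument is essentially bookkeeping once the right reductions are in place, so the main obstacle will be to keep the order of iterated intersections straight: $(v^{n_1}) \cap (a_1) \cap [A]$ and $(a_1) \cap (v^{n_1}) \cap [A]$ are genuinely different cycles, and they must not be confused when lining up the six terms on the RHS against the four reductions from step two and the two known formulas (7) and (8).
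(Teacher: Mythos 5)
Your proposal is correct, and it relies on exactly the same ingredients as the paper's own proof: the scalar relation $u^{m_1}a_1 = v^{n_1}b_1$ (equivalently the additivity of $\div$ on products), the vanishing $(X)\cap\div(X)=0$, and the cancellation of equations (7) and (8). The only difference is bookkeeping: the paper works through a single chain of equalities using $(x)\cap(y)\cap[A]=(x)\cap\div(y/x)$, while you first derive four symmetric reduction identities from the one cycle relation $\div(u^{m_1})+\div(a_1)=\div(v^{n_1})+\div(b_1)$ and then substitute; this is a somewhat cleaner packaging of the same argument.
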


\begin{proof}  We will use the following fact, for $x,y \in A: (x) \cap (y) \cap [A] = (x) \cap \div(y/x)$.

$(u^{m_1}) \cap (v^{n_1}) \cap [A] - (v^{n_1}) \cap (u^{m_1}) \cap [A] $

$=(u^{m_1}) \cap \div(v^{n_1}/u^{m_1}) - (v^{n_1}) \cap \div(u^{m_1}/v^{n_1})$

$=(u^{m_1}) \cap \div(a_1/b_1) - (v^{n_1}) \cap \div(b_1/a_1) $

$=(u^{m_1}) \cap \div(a_1) - \underbrace{(u^{m_1}) \cap \div(b_1) + (v^{n_1}) \cap \div(a_1)}_{\text{equations }(7), (8)} - (v^{n_1}) \cap \div(b_1)$

$=(u^{m_1}) \cap \div(a_1) + (b_1) \cap \div(a_1) - (a_1) \cap \div(b_1) - (v^{n_1}) \cap \div(b_1)$

$=(u^{m_1}) \cap \div(a_1) + (b_1) \cap \div(a_1/b_1) - (a_1) \cap \div(b_1/a_1) - (v^{n_1}) \cap \div(b_1)$

$=(u^{m_1}) \cap \div(a_1) + (b_1) \cap \div(v^{n_1}/u^{m_1}) - (a_1) \cap \div(u^{m_1}/v^{n_1}) - (v^{n_1}) \cap \div(b_1)$

$=(u^{m_1}) \cap (a_1) \cap [A] - (a_1) \cap (u^{m_1}) \cap [A] +  (b_1) \cap (v^{n_1}) \cap [A] - (v^{n_1}) \cap (b_1) \cap [A] $

\hskip.5in $+ (a_1) \cap (v^{n_1}) \cap [A] - (b_1) \cap (u^{m_1}) \cap [A]$.

\end{proof}


Lemma \ref{threeterms} represents the difference $ (u^{m_1}) \cap (v^{n_1}) \cap [A] - 
(v^{n_1}) \cap (u^{m_1}) \cap [A]$ as a sum of three terms, each of which is
itself a difference of two terms.  We will establish our theorem by
computing each of these three differences and combining the results.

First, we need to find the remaining elements $a_i, b_i$ for $2 \leq i \leq r$.  Again we use the Approximation Theorem, and always we set $\displaystyle{b_i = \frac{u^{m_i}a_i}{v^{n_i}}}$ once we have chosen $a_i$.  The basic idea is that the element $a_i$ will always be chosen in the $\frak q_l$'s but never in the $\frak q_k'$'s, and the pair $a_i, b_i$ will never be contained in the same $\frak p_j$'s.
To be specific, we  choose $a_2$  
so that it is contained in $\frak p_{G+1}, \dots, \frak p_r$, with (positive) orders $n_2m_{G+1} - m_2n_{G+1}, \dots, n_2m_r - m_2n_r$, but is {\it not} contained in (1) $\frak p_1, \dots, \frak p_G$, (2) any of the $\frak q_k'$, or (3) any of the $J_h$.  In $\frak q_l$ it will have order $n_2t_l$.
We follow the same process for $a_3, \dots, a_G$, and note that none of $b_3, \dots, b_G$ is contained in any $\frak p_j$.  At the next step, the distribution of the $\frak p_j$ will change: we choose $a_{G+1}$ such that 
$$\div(a_{G+1}) = \sum_{j=G+2}^r (n_{G+1}m_j - m_{G+1}n_j) [A/\frak p_j] + \sum_{l=1}^f n_{G+1}t_l [A/\frak q_l] + \sum_h \mu_h [A/I_h],$$ where $n_{G+1}m_j - m_{G+1}n_j \geq 0$ and where the $I_h$ are a finite number of height one primes of $A$ different from all previous collections of height one primes.  Note that
$$\div(b_{G+1}) = \sum_{j=1}^G (m_{G+1}n_j - n_{G+1}m_j) [A/\frak p_j] + \sum_{k=1}^e m_{G+1}s_k [A/\frak q'_k] + \sum_h \mu_h [A/I_h],$$
where $m_{G+1}n_j - n_{G+1}m_j > 0$.  Note that $b_{G+1}$ is contained in $\frak p_1, \dots, \frak p_G$ and no other $\frak p_j$, while $a_{G+1}$ is contained in some subset of $\frak p_{G+2}, \dots, \frak p_r$.  From here we continue in this way to obtain all of the elements $a_i$ and $b_i$.  (Note that $a_r$ will not be contained in any of the $\frak p_j$.)  

It follows directly from the definitions of the $b_j$ that for every $j$ we have

\centerline{$\displaystyle{\frac{b_1^{n_j} a_j^{n_1}}{b_j^{n_1}} = \frac{a_1^{n_j}}{u^{\alpha_j}}}.$ }

The first term in Lemma \ref{threeterms} involves the pair $a_1, u^{m_1}$, which intersects on the primes $\frak p_{G+1}, \dots, \frak p_r$. 
Therefore, by the induction hypothesis, $$(u^{m_1}) \cap (a_1) \cap [A] - (a_1) \cap (u^{m_1}) \cap [A] = \sum_{j=G+1}^r m_1 \text{div} \left(\frak p_j, \frac{a_1^{n_j}}{u^{\alpha_j}} \right)$$
$$=\sum_{j=G+1}^r m_1 \text{div} \left(\frak p_j, \frac{b_1^{n_j}a_j^{n_1}}{b_j^{n_1}} \right)$$
$$= \sum_{j=G+1}^r m_1n_1 \text{div} \left(\frak p_j, \frac{a_j}{b_j} \right) + \sum_{j=G+1}^r m_1n_j  \text{div} \left(\frak p_j, b_1 \right).$$




To compute the second term in Lemma \ref{threeterms}, we note that 
the pair $v^{n_1}, b_1$ is a regular sequence, so
$(b_1) \cap (v^{n_1}) \cap [A] - (v^{n_1}) \cap (b_1) \cap [A] = 0.$



In the third term of Lemma \ref{threeterms}, we have
$$(a_1) \cap (v^{n_1}) \cap [A] = \sum_{j=1}^G m_jn_1 \div(\frak p_j, a_1),$$ since  $a_1\in \frak p_j$ for $j=G+1,\ldots, r$. We also have  
$$(b_1) \cap (u^{m_1}) \cap [A] = \sum_{j=1}^r m_1n_j \div(\frak p_j, b_1)$$
 since $b_1 \notin \frak p_j$ for any $j$.  Thus
$$(a_1) \cap (v^{n_1}) \cap [A] - (b_1) \cap (u^{m_1}) \cap [A]=
\sum_{j=1}^G m_jn_1 \div(\frak p_1, a_1)-\sum_{j=1}^r m_1n_j \div(\frak p_j, b_1).$$
Putting the three terms together, we have $(u^{m_1}) \cap (v^{n_1}) \cap [A] - (v^{n_1}) \cap (u^{m_1}) \cap [A] =$
$$\sum_{j=G+1}^r m_1n_1 \text{div} \left(\frak p_j, \frac{a_j}{b_j} \right) + \sum_{j=G+1}^r m_1n_j  \text{div} \left(\frak p_j, b_1 \right)$$
$$+ 0$$
$$ + \sum_{j=1}^G m_jn_1 \div(\frak p_j, a_1)-\sum_{j=1}^r m_1n_j \div(\frak p_j, b_1).$$
The first sum in this expression is in the form we want.  The remaining three sums combine
to give
$$\sum_{j=1}^G(m_jn_1 \div(\frak p_j, a_1)-m_1n_j \div(\frak p_j, b_1)). \eqno{(9)}$$
We recall that we have $n_1m_j = m_1n_j$ for  each $j=1, \dots, G$.  Consequently, the expression in (9) can be written as
$$=\sum_{j=1}^G(m_jn_1 \div(\frak p_j, a_1)-m_jn_1 \div(\frak p_j, b_1))$$
$$=\sum_{j=1}^Gm_jn_1 \div \left(\frak p_j, \frac{a_1}{b_1} \right).$$
In addition, it follows that
$$\left(\frac{v^{n_1}}{u^{m_1}}\right)^{m_j}=\left(\frac{v^{n_j}}{u^{m_j}}\right)^{m_1}$$
for each $j=1,\ldots, G$.  Since $a_j/b_j=v^{n_j}/u^{m_j}$ for each $j$, this implies that
$(a_1/b_1)^{m_j}=(a_j/b_j)^{m_1}$, so
$$m_j\div \left(\frak p_j,\frac{a_1}{b_1}\right)=m_1\div \left(\frak p_j,\frac{a_j}{b_j} \right).$$
Thus we have 
$$\sum_{j=1}^Gm_jn_1 \div \left(\frak p_j, \frac{a_1}{b_1} \right)=\sum_{j=1}^Gm_1n_1 \div \left(\frak p_j, \frac{a_j}{b_j} \right).$$
Putting this together with the first term finally gives
$$ (u^{m_1}) \cap (v^{n_1}) \cap [A] - 
(v^{n_1}) \cap (u^{m_1}) \cap [A] =\sum_{j=1}^r m_1n_1 \div \left(\frak p_j, \frac{a_j}{b_j} \right).$$
Dividing both sides of this equation by $m_1n_1$ now gives 
$$ (u) \cap (v) \cap [A] - 
(v) \cap (u) \cap [A] =\sum_{j=1}^r  \div \left(\frak p_j, \frac{a_j}{b_j} \right).$$
\end{proof}

We close with an example which demonstrates our choice of $a_i$ and $b_i$ and the cancelation that occurs.  In this instance, we have $r=3$.

\begin{example}  Let $A = k[x,w, \rho, y,z]$, where $k$ is a field.  Let $u = x^2w^3\rho z^2$ and $v = x^4w^6\rho^3y$ and set $\frak p_1 = (x), \frak p_2 = (w), \frak p_3 = (\rho), \frak q' = (z)$, and $\frak q = (y)$.
\vskip.25in

\xymatrix{
 & (z) \ar@{-}[dr] & & (x)  \ar@{-}[dl] \ar@{-}[drrr]  & (w) \ar@{-}[dll]  \ar@{-}[drr]  & (\rho) \ar@{-}[dlll]  \ar@{-}[dr]   & & (y) \ar@{-}[dl]  \\
    &  & u & & & &  v}
      

Using Definition \ref{map}, one can calculate that
 $$(u) \cap (v) \cap [A] = 2[A/(x,y)]+ 3[A/(y,w)] +[A/(\rho, y)] +2[A/(y,z)],$$

and $$(v) \cap (u) \cap [A] = 8[A/(x,z)] + 12[A/(w,z)] +6[A/(\rho, z)] +2[A/(y,z)].$$

Then, $(u) \cap (v) \cap [A] -  (v) \cap (u) \cap [A]$ 
$$ = \div \left((x), y^2/z^8 \right) + \div \left({(w)}, y^3/z^{12} \right) + \div \left({(\rho)}, y/z^6 \right). \eqno{(10)}$$ 
Using the ratios $v^2/u^4$, $v^3/u^6$, and $v/u^3$, choose $a_1 = \rho^2y^2, b_1 = z^8, a_2 = \rho^3 y^3, b_2 = z^{12}, a_3 = y$, and $b_3 = x^2w^3z^6.$  (In this case, no ideals $J_h$ come into play; i.e., the pair $a_1, b_1$ is a system of parameters.  Note that $\alpha_1 = \alpha_2 = 0$, and $\alpha_3 = 2$.)  One can check that the expression in equation (10) is equal to
$$= \div((x), a_1/b_1) + \div((w), a_2/b_2)+ \div \left({(\rho)}, a_3/b_3 \right).$$
\end{example}

\begin{ack}The authors would like to thank Mark Walker for pointing out the connection to $K$-theory
and Bill Fulton for bringing to our attention the paper by Andrew Kresch.  We would
also like to thank the referee for suggesting some simplifications in the
argument in section 5.
\end{ack}


\begin{thebibliography}{99}
\bibitem{F} W.~Fulton, {\em Intersection Theory}, Springer-Verlag, Berlin, 1984.
\bibitem{G} D.~Grayson, {\em Localization for flat modules in algebraic $K$-theory}, J. Algebra {\bf 61}, 463-496 (1979).
\bibitem{K} A.~Kresch, {\em Canonical rational equivalence of intersections of divisors}, Inv. Math {\bf 136} no. 3, 483-496 (1999).
\bibitem{M} H.~Matsumura, {\em Commutative Ring Theory}, Cambridge University Press, Cambridge, 1986.
\bibitem{Q} D.~Quillen, {\em Higher algebraic $K$-theory I}, Lecture Notes in Math. No. 341, Springer-Verlag, New York, 1973.
\bibitem{R} P.~Roberts, {\em Multiplicities and Chern Classes in Local Algebra}, Cambridge University Press, Cambridge, 1998. 
\end{thebibliography}
\end{document}